\title{Imbalances in directed multigraphs}
\begin{document}
\maketitle

\twoauthors{S. Pirzada}{Department of Mathematics, \\ University
of Kashmir, Srinagar, India}{sdpirzada@yahoo.co.in}
 {T. A. Naikoo}{Department of Mathematics, \\ University of Kashmir, Srinagar, India}{tariqnaikoo@rediffmail.com}

\twoauthors{U. Samee}{Department of Mathematics, \\ University of Kashmir, Srinagar, India}{pzsamee@yahoo.co.in}
 {A. Iv\'anyi}{Department of Computer Algebra, E\"otv\"os Lor\'and University, Hungary}{tony@compalg.inf.elte.hu}

\short{S. Pirzada, T. A. Naikoo, U. Samee, A. Iv\'anyi}{Imbalances in directed multigraphs }

\begin{abstract}
 In a directed multigraph, the imbalance of a vertex $v_{i}$ is defined as $b_{v_{i}}=d_{v_{i}}^{+}-d_{v_{i}}^{-}$,
where $d_{v_{i}}^{+}$ and $d_{v_{i}}^{-}$ denote the outdegree and indegree respectively of $v_{i}$. We characterize imbalances in directed multigraphs
and obtain lower and upper bounds on imbalances in such digraphs. Also, we show the existence of a directed
multigraph with a given imbalance set.
\end{abstract}

\section{Introduction}
A directed graph (shortly digraph) without loops and without multi-arcs is called a simple digraph \cite{Gross2004}. The imbalance of a vertex
$v_i$ in a digraph as $b_{v_{i}}$ (or simply $b_{i})= d_{v_{i}}^{+}-d_{v_{i}}^{-}$, where $d_{v_{i}}^{+}$ and
$d_{v_{i}}^{-}$ are respectively the outdegree and indegree of $v_i$. The imbalance sequence of a simple digraph is formed by listing
the vertex imbalances in non-increasing order. A sequence of integers $F=[f_{1},f_{2},\ldots,f_{n}]$ with $f_{1}\geq f_{2}\geq\ldots\geq
f_{n}$ is feasible if the sum of its elements is zero, and satisfies
$\displaystyle\sum_{i=1}^{k}f_{i}\leq k(n-k),$
for $1\leq k< n$.\\

The following result \cite{2} provides a necessary and sufficient
condition for a sequence of integers to be the imbalance sequence of a simple digraph.\\

\begin{theorem}
A sequence is realizable as an imbalance sequence if and only if it is feasible.
\end{theorem}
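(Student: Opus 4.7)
\emph{Necessity.} If $D$ realizes $F$, then $\sum_i f_i = 0$ because every arc contributes $+1$ to one out-degree and $+1$ to one in-degree. For the bound, take $S = \{v_1, \ldots, v_k\}$; arcs inside $S$ cancel in the imbalance sum, so
$$\sum_{i=1}^k f_i \;=\; e(S, \bar S) - e(\bar S, S) \;\leq\; e(S, \bar S) \;\leq\; k(n-k),$$
since simplicity allows at most one arc from each vertex of $S$ to each vertex of $\bar S$.

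\emph{Sufficiency.} I would induct on $n$. The case $n=1$ is vacuous. For the step, let $F$ be feasible; if $f_1 \leq 0$ then sortedness plus $\sum f_i = 0$ force $F \equiv 0$ (realized by the edgeless digraph), so assume $f_1 > 0$ and $f_n < 0$ and set $a := -f_n \geq 1$. The plan is to peel off $v_n$ as a pure sink whose in-neighbours are exactly $v_1, \ldots, v_a$, reducing the problem to realizing on $v_1,\ldots,v_{n-1}$ the residual sequence
$$F' \;=\; [f_1 - 1,\, \ldots,\, f_a - 1,\, f_{a+1},\, \ldots,\, f_{n-1}].$$
Once $F'$ is shown feasible for $n-1$ vertices, the inductive hypothesis supplies a digraph $D'$ on $\{v_1,\ldots,v_{n-1}\}$ realizing it, and attaching $v_n$ with the chosen in-arcs completes the construction.

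The main obstacle is the feasibility verification for $F'$. Its sum is zero by direct computation. In the \emph{un-sorted} ordering the prefix-sum inequalities split into two regimes: for $k \leq a$ the bound reduces at once to $F$-feasibility, while for $k > a$ it reduces, via $\sum f_i = 0$ and the uniform bound $f_i \geq -a$, to the elementary inequality $(n-k)(k-a) \geq k-a$, valid because $k \leq n-1$. The subtle point is that when $f_a = f_{a+1}$ the non-increasing rearrangement of $F'$ can shuffle entries and potentially raise a prefix sum. To rule this out I would prove the auxiliary bound $G(s) := \sum_{i=a+1}^{a+s} f_i \leq s(n-a-s)$, obtained by combining $F$-feasibility at index $a+s$ with the sortedness inequalities $F(a) \geq a f_{a+1} \geq a G(s)/s$; together with routine bookkeeping on the decremented versus non-decremented positions, this controls $\sum_{i \in I} F'_i$ by $k(n-1-k)$ for every $k$-subset $I \subseteq \{1, \ldots, n-1\}$, finishing the induction.
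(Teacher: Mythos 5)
First, note that the paper does not prove this statement at all: Theorem 1 is quoted from Mubayi, Will and West \cite{2}, so there is no in-paper proof to compare against. (The closest relative is the paper's Theorem 4 for $r$-graphs, which is proved by a quite different induction: take a minimal counterexample and either split the sequence at a tight prefix inequality into two independently realizable pieces, or, if all inequalities are strict, realize $[b_1-1,b_2,\ldots,b_n+1]$ and repair it by a local arc change through a third vertex. Your sink-peeling induction is a genuinely different route.)

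Your necessity argument is correct. In the sufficiency direction the overall plan is sound --- one can show that $a=-f_n\leq n-1$ from feasibility at $k=n-1$, and the residual sequence $F'$ really is feasible --- but the step you call ``routine bookkeeping'' is exactly where the difficulty sits, and the route you indicate provably does not close it. For a subset $I=\{1,\ldots,j\}\cup\{a+1,\ldots,a+s\}$ with $k=j+s$, combining $F(j)\leq j(n-j)$ with your auxiliary bound $G(s)\leq s(n-a-s)$ gives
$$\sum_{i\in I}F'_i\;\leq\; j(n-1-j)+s(n-a-s)\;=\;k(n-1-k)+s(2j+1-a),$$
and the error term $s(2j+1-a)$ is strictly positive whenever $s\geq 1$ and $j>(a-1)/2$, e.g.\ already for $j=a-1$, $a\geq 2$. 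So bounding the two blocks separately loses too much precisely in the regime you need. A correct completion does exist with essentially your ingredients, but it must use them jointly rather than additively: writing $c=f_{a+1}$ and $m=a-j$, feasibility at $a+s$ together with $\sum_{i=j+1}^{a}f_i\geq mc$ gives $F(j)+G(s)\leq k(n-k)-m(x+s+m)$, while feasibility at $j$ together with $G(s)\leq sc$ gives $F(j)+G(s)\leq k(n-k)+sx$, where $x=c-(n-j-k)$; the case $x\leq -1$ is settled by the second bound and the case $x\geq 0$ by the first, and either way $\sum_{i\in I}F'_i\leq k(n-1-k)$. Without some such case analysis (or an equivalent exchange argument on a realization), the inductive step is not established, so as written the proof has a genuine gap at its central point.
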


The above result is equivalent to saying that a sequence of integers $B=[b_{1},b_{2},\ldots,b_{n}]$ with $b_{1}\geq
b_{2}\geq\ldots\geq b_{n}$ is an imbalance sequence of a simple digraph if and only if
\begin{equation*}
\sum^{k}_{i=1}b_i\leq k(n-k),
\end{equation*}
for $1\leq k< n$, with equality when $k=n$.

On arranging the imbalance sequence in non-decreasing order, we have the following observation.

\begin{corollary}
A sequence of integers   $B=[b_{1},b_{2},\ldots,b_{n}]$ with $b_{1}\leq b_{2}\leq\ldots\leq  b_{n}$ is an imbalance sequence of a simple digraph if
and only if
\begin{equation*}
\sum^{k}_{i=1}b_i\geq k(k-n),
\end{equation*}
 for $1\leq k< n$ with equality when $k=n$.
\end{corollary}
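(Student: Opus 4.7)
The plan is to derive the corollary directly from Theorem~1 by means of a reindexing argument, since the two statements describe the same sequences written in opposite orders.

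First, I would start with a non-decreasing sequence $B=[b_1,b_2,\ldots,b_n]$ with $b_1\le b_2\le\cdots\le b_n$ and define its reverse $F=[f_1,f_2,\ldots,f_n]$ by $f_i=b_{n-i+1}$. Then $F$ is non-increasing, and $B$ and $F$ are realizable by the same digraph (or by no digraph), since realizability does not depend on the order in which the imbalances are listed. Hence, by Theorem~1, $B$ is an imbalance sequence if and only if $\sum_{i=1}^{k} f_i\le k(n-k)$ for $1\le k<n$, with equality at $k=n$.

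Next, I would translate the condition on $F$ back into a condition on $B$. Writing
\begin{equation*}
\sum_{i=1}^{k} f_i \;=\; \sum_{i=1}^{k} b_{n-i+1} \;=\; \sum_{j=1}^{n} b_j \;-\; \sum_{j=1}^{n-k} b_j,
\end{equation*}
the equality case $k=n$ gives $\sum_{j=1}^{n} b_j=0$, so for $1\le k<n$ the Theorem~1 inequality becomes $-\sum_{j=1}^{n-k} b_j\le k(n-k)$, i.e.\ $\sum_{j=1}^{n-k} b_j\ge -k(n-k)$. Substituting $m=n-k$ (so $m$ runs over $1,\ldots,n-1$ as $k$ runs over $n-1,\ldots,1$) yields $\sum_{j=1}^{m} b_j\ge m(m-n)$, exactly the inequality claimed, with equality at $m=n$ coming from the zero-sum condition.

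There is no substantial obstacle here; the proof is a one-to-one correspondence between the two normalizations of the sequence. The only care needed is bookkeeping of the index change and ensuring that the boundary case $k=n$ (equivalently $m=n$) is handled consistently, which it is, since both conditions reduce to $\sum b_j=0$.
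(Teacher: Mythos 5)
Your proof is correct and follows exactly the route the paper intends: the corollary is stated there as an immediate observation obtained by reversing the order of the sequence in Theorem~1, and your reindexing computation (using the zero-sum condition to convert the partial-sum upper bounds into the stated lower bounds) is precisely the omitted verification. No issues.
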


Various results for imbalances in simple digraphs and oriented graphs can be found in \cite{3}, \cite{4}.

\section{Imbalances in $r$-graphs}

A multigraph is a graph from which multi-edges are not removed, and
which has no loops \cite{Gross2004}.  If $r \geq 1$ then an $r$-digraph (shortly
 $r$-graph)  is an orientation of a multigraph that is without loops and contains at most $r$ edges between the elements of any
pair of distinct vertices. Clearly 1-digraph is an oriented graph. Let $D$ be an $f$-digraph with vertex set $V=\{v_{1},v_{2},\ldots,v_{n}\}$, and let
$d_{v}^{+}$ and $d_{v}^{-}$ respectively denote the outdegree and indegree of vertex $v$. Define $b_{v_{i}}$ (or simply $b_{i}$) $=d_{v_{i}}^{+}-d_{u_{i}}^{-}$ as
imbalance of $v_{i}$. Clearly, $-r(n-1)\leq b_{v_{i}}\leq r(n-1)$. The imbalance sequence of $D$ is formed by listing the vertex imbalances in
non-decreasing order.

We remark that $r$-digraphs are special cases of $(a,b)$-digraphs containing at least $a$ and at most $b$ edges between the elements of any pair of vertices. Degree sequences of $(a,b)$-digraphs are studied in \cite{5,6}.

Let $u$ and $v$ be distinct vertices in $D$. If there are $f$ arcs directed from $u$ to $v$ and $g$ arcs directed from $v$ to $u$, we denote this by
$u(f-g)v$, where $0\leq f,g, f+g \leq r$.

A double in $D$ is an induced directed subgraph with two vertices $u,$ and $v$ having the form $u(f_1 – f_2)v$, where $1 \leq f_1$, $ f_2 \leq r,$ and $1 \leq f_1 + f_2 \leq r,$ and $f_1$ is the number of arcs directed from $u$ to $v,$ and $f_2$ is the number of arcs directed from $v$ to $u$. A triple in $D$ is an induced subgraph with tree vertices $u$, $v$, and $w$ having the form $u(f_1 – f_2)v(g_1 – g_2)w(h_1 – h_2)u$, where  $1 \leq f_1$, $f_2$, $g_1$, $g_2$,  $h_1$,  $h_2 \leq r,$ and $1 \leq f_1 + f_2$, $g_1 + g_2 $, $ h_1 + h_2 \leq r,$ and the meaning of $f_1$, $f_2$, $ g_1$, $ g_2$, $ h_1$, $ h_2$ is similar to the meaning in the definition of doubles. An oriented triple in $D$ is an
induced subdigraph with three vertices. An oriented triple is said to be transitive if it is of the form $u(1-0)v(1-0)w(0-1)u$, or
$u(1-0)v(0-1)w(0-0)u$, or $u(1-0)v(0-0)w(0-1)u$, or $u(1-0)v(0-0)w(0-0)u$, or $u(0-0)v(0-0)w(0-0)u$, otherwise it is
intransitive. An $r$-graph is said to be transitive if all its oriented triples are transitive. In particular, a triple $C$ in an $r$-graph is transitive if every oriented triple of $C$ is transitive.

The following observation can be easily established and is analogues to Theorem 2.2 of Avery \cite{1}.\\

\begin{lemma}
If $\,D_{1}$ and $D_{2}$ are two $r$-graphs with   same imbalance sequence, then $D_{1}$ can be transformed to $D_{2}$
  by successively transforming (i) appropriate oriented triples in one of the following ways, either (a) by changing the intransitive oriented triple
  $u(1-0)v(1-0)w(1-0)u$ to a transitive oriented triple  $u(0-0)v(0-0)w(0-0)u$, which has the same imbalance sequence or vice
  versa, or (b) by changing the intransitive oriented triple $u(1-0)v(1-0)w(0-0)u$ to a transitive oriented triple
  $u(0-0)v(0-0)w(0-1)u$, which has the same imbalance sequence or vice versa; or (ii) by changing a double $u(1-1)v$ to a double $u(0-0)v$,
  which has the same imbalance sequence or vice versa.
\end{lemma}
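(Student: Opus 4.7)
My plan is to prove the lemma by induction on the number of vertices $n$, in direct analogy with Avery's proof of Theorem~2.2 in \cite{1} for tournaments, adapting each step to accommodate the multi-arc structure allowed in $r$-graphs.

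For the base case $n\leq 2$ the statement is essentially immediate: any two configurations $u(f_1-f_2)v$ and $u(f_1'-f_2')v$ satisfying $f_1-f_2=f_1'-f_2'$ (forced by equal imbalances at $u$ and $v$) differ by a finite sequence of applications of move~(ii), since that move precisely inserts or deletes an oppositely oriented pair between the same two vertices and preserves the imbalance sequence.

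For the inductive step, assume the result for $r$-graphs on fewer than $n$ vertices and let $D_1,D_2$ be two $r$-graphs on the vertex set $V=\{v_1,\ldots,v_n\}$ with the same imbalance sequence; relabel so that $b_{v_i}^{D_1}=b_{v_i}^{D_2}$ for every $i$. Single out a vertex $v$ and show that the allowed moves can transform $D_1$ into some $D_1'$, still of the same imbalance sequence, whose star at $v$ coincides with the star of $v$ in $D_2$. Once this is achieved, $D_1'-v$ and $D_2-v$ are $r$-graphs on $n-1$ vertices with equal imbalance sequences and the inductive hypothesis completes the argument. To match the stars, compare $u(f-g)v$ in $D_1$ with $u(f'-g')v$ in $D_2$ for each $u\neq v$: if $f-g=f'-g'$ then move~(ii) alone suffices; otherwise, since the imbalances of $v$ agree in $D_1$ and $D_2$, any local discrepancy at $u$ must be compensated by an opposite discrepancy at some third vertex $w$. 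A direct case analysis then shows that $\{u,v,w\}$ induces one of the intransitive triples $u(1-0)v(1-0)w(1-0)u$ or $u(1-0)v(1-0)w(0-0)u$ (or a reorientation thereof), so an application of move~(i) in case (a) or (b) shifts one unit of discrepancy away from the edge $uv$ without disturbing any vertex imbalance.

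The main obstacle, and the step that needs the most care, is the case analysis guaranteeing that a suitable intransitive triple on $\{u,v,w\}$ is always available whenever the stars of $v$ in $D_1$ and $D_2$ disagree. Once this "switching at a vertex" claim is verified, iterating the move strictly decreases a potential function such as $\sum_{u\neq v}\bigl|(f-g)-(f'-g')\bigr|$ measured at the star of $v$, and the induction closes cleanly. The rest of the argument is purely bookkeeping; this is why the authors can reasonably call the lemma an easy analogue of Avery's result.
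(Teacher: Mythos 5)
The paper offers no proof of this lemma at all (it is stated as an observation ``analogous to Theorem 2.2 of Avery''), so your argument must stand on its own --- and it does not, because the step you yourself flag as ``the main obstacle'' is not merely delicate but false as you state it. You claim that whenever the stars of $v$ in $D_1$ and $D_2$ disagree, a discrepant pair $\{u,v\}$ together with a compensating vertex $w$ induces one of the two intransitive triples (up to reorientation), so that a type (i) move is available. Counterexample with $r=1$ on $V=\{u,v,w,x\}$: let $D_1$ have arcs $u\to v$, $u\to x$, $x\to w$ and $D_2$ have arcs $u\to x$, $x\to v$, $u\to w$. Both have $b_u=2$, $b_x=0$, $b_v=b_w=-1$. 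The star of $v$ disagrees exactly on the pairs $\{u,v\}$ and $\{x,v\}$, so your scheme forces the triple $\{u,v,x\}$; but in $D_1$ this triple induces $u(1-0)v(0-0)x(0-1)u$, which is one of the paper's listed \emph{transitive} forms. No type (i) move applies to it in either direction (there is no directed $2$-path, no $3$-cycle, and it is neither a single arc nor empty), and type (ii) moves leave every net arc balance unchanged, so your potential function cannot be decreased by any move confined to triples through the discrepant edges at $v$. For $r\ge 2$ there is a further obstruction you do not address: the third side $\{u,w\}$ of the chosen triple may already carry $r$ arcs, all oriented in the direction the move would need to augment, which again blocks every variant of (i).

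The lemma does hold in this example --- apply (i)(b) to the path $u\to x\to w$ to reach $\{u\to v,\ u\to w\}$, then the reverse of (i)(b) to replace the arc $u\to v$ by the path $u\to x\to v$, obtaining $D_2$ --- but the repair requires a first move that does not touch the star of $v$ at all. Hence the greedy ``match the star of $v$ using only triples through $v$, then delete $v$ and induct'' architecture cannot close as written. The standard ways to make the argument work are either to show that the moves drive every realization down to an arc-minimal (transitive) realization and that any two such minimal realizations with the same imbalance sequence are connected by the moves (running the paper's arc-minimality theorem in both directions through the minimum), or to decompose the arcwise difference of $D_1$ and $D_2$ into closed alternating walks and eliminate them one triangle at a time. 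One of these routes has to be carried out explicitly; as it stands, your write-up asserts precisely the claim that needs proof, and that claim is false in the form you give it.
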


The above observations lead to the following result.

\begin{theorem} Among all $r$-graphs with given imbalance sequence, those with the fewest arcs are transitive.
\end{theorem}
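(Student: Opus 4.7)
My plan is to deduce the theorem directly from Lemma~2 by exploiting the observation that each of the three transformations listed there can be oriented so as to strictly decrease the number of arcs while preserving the imbalance sequence. Reading left-to-right, move (i)(a) replaces three arcs by none, move (i)(b) replaces two arcs by one, and move (ii) replaces two arcs by none. Consequently, if $D$ is an $r$-graph with the fewest arcs among all realizations of its imbalance sequence, then no such ``forward'' move can be applicable anywhere in $D$.

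Assume, for contradiction, that a minimum $D$ is not transitive. Then by definition $D$ contains an intransitive oriented triple on some vertices $u$, $v$, $w$. My plan here is to enumerate the isomorphism classes of induced $3$-vertex subdigraphs in which each pair is joined by at most one arc and to verify that the five transitive forms listed just before Lemma~2 account for all but two of these: the directed $3$-cycle $u(1-0)v(1-0)w(1-0)u$ and the directed path $u(1-0)v(1-0)w(0-0)u$. These are precisely the configurations targeted by parts (i)(a) and (i)(b) of Lemma~2, so after an appropriate relabeling of $u$, $v$, $w$ the corresponding forward move is available; applying it produces an $r$-graph $D'$ with the same imbalance sequence as $D$ but strictly fewer arcs, contradicting the minimality of $D$.

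I would also record the companion observation, useful for intuition though not strictly required by the statement: a minimum $D$ cannot contain a double $u(1-1)v$, since move (ii) would otherwise reduce the arc count while preserving the imbalance sequence. The only point in the argument that calls for any care is the finite classification of intransitive oriented triples into the two forms above, but this is an elementary enumeration over $3$-vertex labeled oriented graphs and does not constitute a serious obstacle.
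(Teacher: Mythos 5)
Your proposal is correct and takes essentially the same route as the paper's proof: both argue that a non-transitive realization contains one of the two intransitive oriented triples (the $3$-cycle or the directed $2$-path) and apply the corresponding arc-reducing transformation from the lemma to contradict minimality. Your version is slightly more careful in that it makes explicit the finite check that these two forms exhaust the intransitive triples, and correctly observes that the double-removal move (ii), which the paper also invokes, is not needed for the statement as literally phrased.
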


\begin{proof}
Let $B$ be an imbalance sequence and let $D$ be a realization of $B$ that is not transitive. Then $D$ contains an
intransitive oriented triple. If it is of the form $u(1-0)v(1-0)w(1-0)u$, it can be transformed by operation \emph{i(a)} of Lemma 3 to a transitive oriented
triple $u(0-0)v(0-0)w(0-0)u$ with the same imbalance sequence and three arcs fewer. If $D$ contains an intransitive
oriented triple of the form $u(1-0)v(1-0)w(0-0)u$, it can be transformed by operation \emph{i(b)} of Lemma 3 to a transitive oriented
triple $u(0-0)v(0-0)w(0-1)u$ same imbalance sequence but one arc fewer. In case $D$ contains both types of intransitive
oriented triples, they can be transformed to transitive ones with certainly lesser arcs. If in $D$ there is a double $u(1-1)v$, by
operation \emph{(ii)} of Lemme 4, it can be transformed to $u(0-0)v$, with same imbalance sequence but two arcs fewer.
\end{proof}

The next result gives necessary and sufficient conditions for a sequence of integers to be the imbalance sequence of some $r$-graph.

\begin{theorem}
 A sequence $B=[b_{1},b_{2},\ldots,b_{n}]$ of   integers in non-decreasing order is an imbalance sequence of an $r$-graph if and only if
\begin{equation}
\sum^{k}_{i=1}b_i\geq rk(k-n),
\end{equation}
with equality when $k=n$.
\end{theorem}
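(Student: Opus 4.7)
The plan is to split the equivalence into necessity and sufficiency; the first is a direct counting argument and the second an induction on $n$ patterned on the proof of Theorem 1.

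\emph{Necessity.} Suppose $D$ realizes $B = [b_{1} \le \cdots \le b_{n}]$ and set $S_{k} = \{v_{1}, \ldots, v_{k}\}$. Summing the identity $b_{v_{i}} = d_{v_{i}}^{+} - d_{v_{i}}^{-}$ over $v_{i} \in S_{k}$ cancels the arcs internal to $S_{k}$, so $\sum_{i=1}^{k} b_{i}$ equals the number of arcs from $S_{k}$ to $V \setminus S_{k}$ minus the number going the other way. The latter is at most $rk(n-k)$, which yields $\sum_{i=1}^{k} b_{i} \ge -rk(n-k) = rk(k-n)$; at $k = n$ there are no crossing arcs, forcing equality.

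\emph{Sufficiency.} Induct on $n$; the case $n = 1$ is trivial. For $n \ge 2$, combining (2.1) at $k = n-1$ with the equality at $k = n$ yields $0 \le b_{n} \le r(n-1)$, so write $b_{n} = rq + s$ with $0 \le s < r$ and $0 \le q \le n-1$. Build the neighborhood of $v_{n}$ by placing $r$ arcs from $v_{n}$ to each of $v_{1}, \ldots, v_{q}$, placing $s$ arcs from $v_{n}$ to $v_{q+1}$ when $s > 0$, and no arcs into $v_{n}$; then $v_{n}$ has imbalance exactly $b_{n}$. The task reduces to constructing an $r$-graph on $\{v_{1}, \ldots, v_{n-1}\}$ with target imbalances $b'_{i} = b_{i} + r$ for $i \le q$, $b'_{q+1} = b_{q+1} + s$, and $b'_{i} = b_{i}$ for $i > q+1$, whose entries sum to zero.

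\emph{Main obstacle.} The heart of the proof is verifying that the re-sorted residual $B'$ satisfies $\sum_{j=1}^{k} b'_{(j)} \ge rk(k-n+1)$ for $1 \le k < n-1$, the equality at $k = n-1$ being automatic from the vanishing sum. The difficulty is that the additive contributions $r$ on $\{1, \ldots, q\}$ and $s$ on $q+1$ may permute the ordering, so the minimizing $k$-subset $T \subseteq \{1, \ldots, n-1\}$ need not be an initial segment. Writing $\sum_{i \in T} b'_{i} = \sum_{i \in T} b_{i} + r|T \cap \{1, \ldots, q\}| + s\varepsilon$, where $\varepsilon \in \{0,1\}$ records whether $q+1 \in T$, I would run a case analysis on $|T \cap \{1, \ldots, q+1\}|$, combining the hypothesis (2.1) on $B$ at a suitably chosen length with the monotonicity $b_{1} \le \cdots \le b_{n-1}$ to absorb the slack created whenever $T$ deviates from an initial segment. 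Once $B'$ is confirmed feasible, the inductive hypothesis produces an $r$-graph $D'$ on $n-1$ vertices realizing it, and joining $D'$ to $v_{n}$ through the prescribed arcs yields the desired $r$-graph on $n$ vertices.
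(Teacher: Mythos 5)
Your necessity argument is fine and is essentially the paper's (the paper compresses it to one sentence about the subdigraph induced by $k$ vertices; your version, counting crossing arcs after internal cancellation, is the correct expansion of that sentence). Your sufficiency argument, however, takes a genuinely different route from the paper and is not complete. The paper does not peel off a vertex: it takes a minimal counterexample ($n$ smallest, then $b_{1}$ least) and splits into two cases --- if equality holds in (1) for some $k<n$ it cuts the sequence there, realizes the two pieces separately and takes a disjoint union; if all inequalities are strict it perturbs to $[b_{1}-1,b_{2},\ldots,b_{n-1},b_{n}+1]$, realizes that by minimality of $b_{1}$, and recovers $B$ by a single arc switch through an intermediate vertex $v_{p}$. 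That perturbation preserves the non-decreasing order and the inequalities (1), so the re-sorting problem you run into never arises there.

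The gap in your version is precisely the step you flag as the ``main obstacle'' and then defer: the feasibility of the residual sequence $B'$ is the entire content of the induction step, and ``I would run a case analysis \ldots to absorb the slack'' is a plan, not a proof. It is also not a routine verification. The obvious estimate $\sum_{i\in T}b_{i}\geq\sum_{i=1}^{k}b_{i}\geq rk(k-n)$ leaves you needing $r\,|T\cap\{1,\ldots,q\}|+s\varepsilon\geq rk$, which holds only when $T\subseteq\{1,\ldots,q\}$; for every other $T$ you must extract extra slack from the hypothesis (1) applied at indices other than $k$, together with monotonicity, and the bookkeeping genuinely depends on how $T$ straddles the three blocks $\{1,\ldots,q\}$, $\{q+1\}$, $\{q+2,\ldots,n-1\}$ and on the remainder $s$. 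I believe the peeling approach can be pushed through (I could not produce a counterexample in small cases), but as written the proof stops exactly where the work begins. Either carry out that case analysis in full, or switch to an induction scheme --- such as the paper's minimality-of-$b_{1}$ perturbation --- in which the modified sequence stays sorted and inherits (1) immediately.
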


\begin{proof}
{\bf Necessity.} A multi subdigraph induced by $k$ vertices has a sum of imbalances $rk(k-n)$.\\

{\bf Sufficiency.} Assume that $B=[b_{1},b_{2},\ldots,b_{n}]$ be the sequence of integers in non-decreasing order satisfying conditions (1)
but is not the  imbalance sequence of any $r$-graph. Let this sequence  be chosen in such a way that n is the smallest possible and
$b_1$ is the least with that choice of $n$. We consider the following two cases.

{\bf Case (i).} Suppose equality in (1) holds for some $k\leq n$, so that
\begin{equation*}
\sum_{i=1}^{k}b_{i}= rk(k-n),
\end{equation*}
for $1\leq k <n$.

By minimality of $n$, $B_{1}=[b_{1},b_{2},\ldots, b_{k}]$  is the imbalance sequence of some $r$-graph $D_1$ with vertex set, say $V_{1}$.
Let $B_{2}=[b_{k+1},b_{k+2},\ldots,b_{n}]$.\\
Consider,\\
\begin{equation*}
\begin{split}
\sum_{i=1}^{f}b_{k+i}&=\sum_{i=1}^{k+f}b_{i}-\sum_{i=1}^{k}b_{i}\\&\geq
 r(k+f)[(k+f)-n]-rk(k-n)\\&=r(k_{2}+kf-kn+fk+f_{2}-fn-k_{2}+kn)\\&\geq
 r(f_{2}-fn)\\&=rf(f-n),
\end{split}
\end{equation*}
for $1\leq f\leq n-k$, with equality when $f=n-k$. Therefore, by the minimality for $n$, the sequence $B_{2}$ forms the imbalance sequence
of some $r$-graph $D_{2}$ with vertex set, say $V_{2}$. Construct a new $r$-graph $D$ with vertex set as follows.\\
\indent Let $V=V_{1}\cup V_2$ with, $V_1\cap V_2=\phi$ and  the arc set containing those arcs which are in $D_1$ and $D_2$. Then we obtain
the $r$-graph $D$ with the imbalance sequence $B$, which is a contradiction.\\
{\bf Case (ii).} Suppose that the strict inequality holds in (1) for some $k<n$, so that\\
\begin{equation*}
\sum_{i=1}^{k}b_{i}>rk(k-n),
\end{equation*}
for $1\leq k<n$. Let $B_1=[b_{1}-1,b_{2},\ldots,b_{n-1},b_{n}+1]$, so that $B_1$ satisfy the conditions (1). Thus by the minimality of
$b_1$, the sequences $B_1$ is the imbalances sequence of some $r$-graph $D_{1}$ with vertex set, say $V_{1})$. Let $b_{v_{1}}=b_{1}-1$
and $b_{v_{n}}=a_{n}+1$. Since $b_{v_{n}}>b_{v_{1}}+1$, there exists a vertex $v_p\in V_1$ such that $v_{n}(0-0)v_{p}(1-0)v_1$, or
$v_{n}(1-0)v_{p}(0-0)v_1$, or $v_{n}(1-0)v_{p}(1-0)v_1$, or $v_{n}(0-0)v_{p}(0-0)v_1$, and if these  are changed to $v_{n}(0-1)v_{p}(0-0)v_1$, or
$v_{n}(0-0)v_{p}(0-1)v_1$, or $v_{n}(0-0)v_{p}(0-0)v_1$, or $v_{n}(0-1)v_{p}(0-1)v_1$ respectively, the result is an $r$-graph
with imbalances sequence $B$, which is again a contradiction. This proves the result.
\end{proof}

Arranging the imbalance sequence in non-increasing order, we have the following observation.

\begin{corollary}
A sequence $B=[b_{1},b_{2},\ldots,b_{n}]$ of integers with $b_{1}\geq b_{2}\geq \ldots \geq b_{n}$ is an
  imbalance sequence of an $r$-graph if and only if
\begin{equation*}
\sum_{i=1}^{k}b_{i}\leq rk(n-k),
\end{equation*}
for $1\leq k\leq n$, with equality when $k=n$.
\end{corollary}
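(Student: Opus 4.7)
The plan is to derive Corollary 6 directly from Theorem 5 by reversing the sequence. Set $b'_i = b_{n+1-i}$, so that $B' = [b'_1, b'_2, \ldots, b'_n]$ is in non-decreasing order exactly when $B = [b_1, b_2, \ldots, b_n]$ is in non-increasing order. Crucially, a sequence of integers is the imbalance sequence of some $r$-graph $D$ regardless of the order in which we list the entries, so $B'$ is realizable as an imbalance sequence iff $B$ is. Thus Theorem 5 applies to $B'$ and gives an iff condition that I will then rewrite as a condition on $B$.

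Next I would compute partial sums of $B'$ in terms of $B$:
\begin{equation*}
\sum_{i=1}^{k} b'_i \;=\; \sum_{j=n-k+1}^{n} b_j \;=\; \sum_{j=1}^{n} b_j \;-\; \sum_{j=1}^{n-k} b_j.
\end{equation*}
The $k=n$ equality case in Theorem 5 forces $\sum_{j=1}^{n} b_j = 0$, so the inequality $\sum_{i=1}^{k} b'_i \geq rk(k-n)$ becomes
\begin{equation*}
-\sum_{j=1}^{n-k} b_j \;\geq\; -\,rk(n-k), \qquad \text{i.e.,}\qquad \sum_{j=1}^{n-k} b_j \;\leq\; rk(n-k).
\end{equation*}
Substituting $m = n-k$, this is exactly $\sum_{j=1}^{m} b_j \leq rm(n-m)$, and as $k$ ranges over $1 \leq k < n$, the index $m$ ranges over $1 \leq m \leq n-1$. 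The equality case $k=n$ in Theorem 5 translates to the equality case $m=n$ in Corollary 6 (both saying the total sum is zero, which equals $rn(n-n)$).

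Since each step of the translation is an equivalence, the iff character of Theorem 5 carries over directly, yielding Corollary 6. The only real care needed is the bookkeeping in the index reversal and the use of the total-sum-zero identity to convert the tail sum $\sum_{j=n-k+1}^n b_j$ into a head sum $-\sum_{j=1}^{n-k} b_j$; this is the one place where a sign error would be easy to make, but there is no conceptual obstacle beyond that.
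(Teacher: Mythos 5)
Your proposal is correct and matches the paper's (implicit) argument: the paper simply states this corollary as an observation obtained by rearranging the sequence from Theorem 5, and your reversal $b'_i = b_{n+1-i}$ together with the total-sum-zero identity is exactly the bookkeeping needed to make that precise. The sign manipulation and the index substitution $m = n-k$ are carried out correctly, including the equality case at $k=n$.
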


The converse of an $r$-graph $D$ is an  $r$-graph $D^\prime$, obtained by reversing orientations of all arcs of $D$. If
$B=[b_{1},b_{2},\ldots,b_{n}]$ with $b_{1}\leq b_{2}\leq \ldots \leq b_{n}$ is the imbalance sequence of an $r$-graph $D$, then
$B^{\prime}=[-b_{n},-b_{n-1},\ldots,-b_{1}]$ is  the imbalance sequence of $D'$.

The next result gives lower and upper bounds for the imbalance $b_{i}$ of a vertex $v_{i}$ in an $r$-graph $D$.

\begin{theorem}
If $B=[b_{1},b_{2},\ldots,b_{n}]$ is an imbalance sequence of an $r$-graph $D$, then for each $i$
\begin{equation*}
r(i-n)\leq b_{i}\leq r(i-1).
\end{equation*}
\end{theorem}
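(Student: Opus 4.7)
My plan is to derive both bounds directly from the inequalities in Theorem 5 by exploiting the fact that the sequence is non-decreasing. No new combinatorial construction is needed; both bounds fall out from averaging arguments on appropriate prefix and suffix sums of $B$.

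For the lower bound $b_i \geq r(i-n)$, I would start from the monotonicity $b_1 \leq b_2 \leq \cdots \leq b_i$, which gives $\sum_{j=1}^{i} b_j \leq i\, b_i$. Combining this with the Theorem~5 inequality $\sum_{j=1}^{i} b_j \geq r i(i-n)$ yields $i\, b_i \geq r i(i-n)$, and dividing by $i$ gives the lower bound.

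For the upper bound $b_i \leq r(i-1)$, I would apply the same idea to the tail of the sequence. Since $b_j \geq b_i$ for $j \geq i$, we have $\sum_{j=i}^{n} b_j \geq (n-i+1)\, b_i$. Because the total sum is zero (the equality case $k=n$ in Theorem~5), the suffix sum equals $-\sum_{j=1}^{i-1} b_j$, and applying Theorem~5 to $k = i-1$ gives $\sum_{j=1}^{i-1} b_j \geq r(i-1)(i-1-n)$, so $\sum_{j=i}^{n} b_j \leq r(i-1)(n-i+1)$. Putting the two suffix estimates together, $(n-i+1)\, b_i \leq r(i-1)(n-i+1)$, and dividing by $n-i+1$ finishes the proof.

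Neither step looks like a real obstacle: the proof is essentially a two-line averaging argument in each direction, with the only subtlety being to make sure the endpoints $i=1$ and $i=n$ behave properly (at $i=1$ the prefix sum in the upper bound is empty and gives $b_1 \leq 0$; at $i=n$ the suffix in the lower bound is a single term and gives $b_n \geq 0$, both consistent with the claimed bounds). I would present the two bounds as separate displays for clarity, cite Theorem~5 for the prefix inequality, and note the use of the equality $\sum_{j=1}^n b_j = 0$ when deriving the suffix bound.
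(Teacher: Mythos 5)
Your proposal is correct. The lower bound is essentially the paper's own argument: the paper assumes $b_i < r(i-n)$, uses monotonicity to bound each of $b_1,\ldots,b_i$ above by $b_i$, sums, and contradicts the prefix inequality of the characterization theorem — which is exactly your averaging step $\sum_{j=1}^i b_j \leq i\,b_i$ read contrapositively. For the upper bound, however, you take a genuinely different route. The paper does not touch suffix sums at all: it passes to the converse $r$-graph $D'$, whose imbalance sequence is $[-b_n,\ldots,-b_1]$, applies the already-proved lower bound to $b'_{n-i+1}$, and negates. Your argument instead stays entirely inside the arithmetic of the characterization: monotonicity gives $(n-i+1)\,b_i \leq \sum_{j=i}^{n} b_j$, the zero-sum condition converts the suffix into $-\sum_{j=1}^{i-1} b_j$, and the prefix inequality at $k=i-1$ bounds that by $r(i-1)(n-i+1)$. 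Both are sound; the paper's duality trick is shorter once the converse-sequence fact is on record (which it is, in the paragraph preceding the theorem), while yours is more self-contained — it shows the bounds follow from the inequalities of the characterization alone, without invoking realizability or constructing an auxiliary digraph. Your handling of the boundary cases $i=1$ and $i=n$ is also correct.
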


\begin{proof}
Assume to the contrary that $b_{i}<r(i-n)$, so that for $k<i$,
\begin{equation*}
b_{k}\leq b_{i}< r(i-n).
\end{equation*}
That is,
\begin{equation*}
b_{1}< r(i-n),b_{2}< r(i-n),\ldots,b_{i}< r(i-n).
\end{equation*}
Adding these inequalities, we get
\begin{equation*}
\sum_{k=1}^{i}b_{k}< ri(i-n),
\end{equation*}
which contradicts Theorem 3.\\
\indent Therefore, $r(i-n)\leq b_{i}$.\\
\indent The second inequality is dual to the first. In the converse
$r$-graph with imbalance sequence
$B=[b_{1}^{\prime},b_{2}^{\prime},\ldots,b_{n}^{\prime}]$ we have, by
the first inequality
\begin{equation*}
\begin{split}
b_{n-i+1}^{\prime}&\geq r[(n-i+1)-n]\\&=r(-i+1).
\end{split}
\end{equation*}
\indent Since $b_{i}=-b_{n-i+1}^{\prime}$, therefore
\begin{equation*}
b_{i}\leq -r(-i+1)=r(i-1).
\end{equation*}
\indent Hence, $b_{i}\leq r(i-1)$.
\end{proof}

Now we obtain the following inequalities for imbalances in $r$-graphs.

\begin{theorem}
If $B=[b_{1},b_{2},\ldots,b_{n}]$ is an imbalance sequence of an $r$-graph with $b_{1}\geq b_{2}\geq
  \ldots \geq b_{n}$, then
\begin{equation*}
\sum_{i=1}^{k}b_{i}^{2}\leq \sum_{i=1}^{k}(2rn-2rk-b_{i})^{2},
\end{equation*}
for $1\leq k\leq n$ with equality when $k=n$.
\end{theorem}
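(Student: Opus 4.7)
The plan is to expand the right-hand side algebraically and reduce the claimed inequality to the degree bound already established in Corollary 7.

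First I would expand each summand on the right, writing
\begin{equation*}
(2rn-2rk-b_i)^2 = (2rn-2rk)^2 - 2(2rn-2rk)b_i + b_i^2,
\end{equation*}
so that summing from $i=1$ to $k$ gives
\begin{equation*}
\sum_{i=1}^{k}(2rn-2rk-b_i)^2 = k(2rn-2rk)^2 - 2(2rn-2rk)\sum_{i=1}^{k}b_i + \sum_{i=1}^{k}b_i^2.
\end{equation*}
Cancelling the common term $\sum b_i^2$ shows that the claimed inequality is equivalent to
\begin{equation*}
2(2rn-2rk)\sum_{i=1}^{k}b_i \leq k(2rn-2rk)^2.
\end{equation*}

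Next I would handle the two cases. For $1 \leq k < n$, the factor $4r(n-k)$ is strictly positive, so dividing both sides by it reduces the inequality to
\begin{equation*}
\sum_{i=1}^{k}b_i \leq rk(n-k),
\end{equation*}
which is precisely the bound from Corollary 7 (since $B$ is written in non-increasing order). For $k = n$, the coefficient $2rn-2rk$ vanishes, and $(2rn-2rk-b_i)^2 = b_i^2$, so both sides equal $\sum_{i=1}^{n}b_i^2$, giving the claimed equality.

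There is essentially no significant obstacle here; the argument is almost entirely algebraic bookkeeping, with the sole point of care being that the division step is valid only for $k<n$, so the boundary case $k=n$ must be treated separately (and it collapses trivially). The nontrivial content of the theorem is exactly the sharp linear bound already supplied by Corollary 7.
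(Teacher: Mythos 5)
Your proposal is correct and follows essentially the same route as the paper: both expand $(2rn-2rk-b_i)^2$, cancel the $\sum b_i^2$ terms, and reduce the claim to the partial-sum bound $\sum_{i=1}^{k}b_i \leq rk(n-k)$ for non-increasing imbalance sequences (the paper merely runs the algebra in the forward direction, multiplying that bound by the nonnegative factor $2(2rn-2rk)$, which avoids your separate treatment of the case $k=n$). The only cosmetic discrepancy is the citation: the bound you invoke is the corollary of Theorem 5 in the paper's numbering, not ``Corollary 7.''
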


\begin{proof}
By Theorem 3, we have for $1\leq k\leq n$ with equality when $k=n$
\begin{equation*}
rk(n-k)\geq \sum_{i=1}^{k}b_{i},
\end{equation*}
implying
\begin{equation*}
\sum_{i=1}^{k}b_{i}^{2}+2(2rn-2rk)rk(n-k)\geq \sum_{i=1}^{k}b_{i}^{2}+2(2rn-2rk)\sum_{i=1}^{k}b_{i},
\end{equation*}
from where
\begin{equation*}
\sum_{i=1}^{k}b_{i}^{2}+k(2rn-2rk)^{2}-2(2rn-2rk)\sum_{i=1}^{k}b_{i}\geq \sum_{i=1}^{k}b_{i}^{2},
\end{equation*}
and so we get the required
\begin{equation*}
\begin{split}
b_{1}^{2}+b_{2}^{2}+\ldots
+b_{k}^{2}&+(2rn-2rk)^{2}+(2rn-2rk)^{2}+\ldots+(2rn-2rk)^{2}\\&-2(2rn-2rk)b_{1}-2(2rn-2rk)b_{2}-\ldots
-2(2rn-2rk)b_{k}\\&\geq \sum_{i=1}^{k}b_{i}^{2},
\end{split}
\end{equation*}
or
\begin{equation*}
\sum_{i=1}^{k}(2rn-2rk-b_{i})^{2}\geq \sum_{i=1}^{k}b_{i}^{2}.
\end{equation*}
\end{proof}

\indent The set of distinct imbalances of vertices in an $r$-graph
is called its imbalance set. The following result gives the existence
of an $r$-graph with a given imbalance set. Let $(p_1,p_2,\ldots,p_m,q_1,q_2,\ldots,q_n)$ denote the greatest common divisor of $p_1,p_2,\ldots,p_n,q_1,q_2,\ldots,q_n$.

\begin{theorem}
If $P=\{p_{1},p_{2},\ldots,p_{m}\}$ and  $Q=\{-q_{1},-q_{2},\ldots,-q_{n}\}$  where  \linebreak $p_{1},p_{2},\ldots,p_{m},q_{1},q_{2},\ldots,q_{n}$ are positive
  integers such that $p_{1}<p_{2}<\ldots <p_{m}$ and $q_{1}<q_{2}<\ldots <q_{n}$ and $(p_{1},p_{2},\ldots,p_{m},q_{1},q_{2},\ldots,q_{n})=t$, $1\leq
  t\leq r$, then there exists an $r$-graph with imbalance set $P\cup Q$.
\end{theorem}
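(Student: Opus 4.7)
The plan is to construct the required $r$-graph explicitly as a disjoint union of $mn$ bipartite pieces, one for each pair $(p_i, -q_j)$. Since $t$ divides each $p_i$ and each $q_j$, I first write $p_i = t\hat p_i$ and $q_j = t \hat q_j$ with positive integers $\hat p_i, \hat q_j$; positivity follows from $p_i, q_j > 0$ together with $t \mid p_i$ and $t \mid q_j$.

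For each pair $(i, j)$ with $1 \le i \le m$ and $1 \le j \le n$, I introduce two disjoint vertex sets $X_{ij}$ of size $\hat q_j$ and $Y_{ij}$ of size $\hat p_i$, and place exactly $t$ arcs directed from every vertex of $X_{ij}$ to every vertex of $Y_{ij}$; call this bipartite digraph $D_{ij}$. Since $t \le r$ by hypothesis, $D_{ij}$ is a legitimate $r$-graph. A direct count shows that every vertex of $X_{ij}$ has outdegree $t \hat p_i = p_i$ and indegree $0$, hence imbalance $p_i$; symmetrically, every vertex of $Y_{ij}$ has imbalance $-q_j$. Letting $D$ be the vertex-disjoint union of all the $D_{ij}$, one sees that $D$ is an $r$-graph whose set of vertex imbalances is exactly $\{p_1, \ldots, p_m\} \cup \{-q_1, \ldots, -q_n\} = P \cup Q$.

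There is no serious obstacle here. The two hypotheses of the theorem are each used exactly once: the divisibility condition $(p_1, \ldots, p_m, q_1, \ldots, q_n) = t$ is needed only so that $\hat p_i = p_i/t$ and $\hat q_j = q_j/t$ are positive integers, and the bound $t \le r$ is needed only so that the $t$ parallel arcs within each bipartite block do not exceed the multiplicity allowed in an $r$-graph. An alternative route, if one preferred to avoid the explicit construction, would be to form an imbalance sequence with appropriate multiplicities of the $p_i$ and $-q_j$ (for instance, $\sum_\ell q_\ell$ copies of each $p_i$ and $\sum_\ell p_\ell$ copies of each $-q_j$, which makes the sum of imbalances zero) and then verify that it satisfies the condition of Theorem 3; but the direct bipartite construction above seems cleaner and avoids any case analysis.
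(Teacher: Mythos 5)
Your construction is correct and is essentially the paper's own proof: both build the $r$-graph as a vertex-disjoint union of complete bipartite blocks in which a part of size $q_j/t$ sends $t$ parallel arcs to every vertex of a part of size $p_i/t$, so that the sources get imbalance $p_i$ and the sinks get imbalance $-q_j$. The only (cosmetic) difference is that you use all $mn$ pairs $(i,j)$, whereas the paper economizes with $m+n-1$ blocks by pairing each $p_i$ with $q_1$ and each $q_j$ with $p_1$.
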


\begin{proof}
Since  $(p_{1},p_{2},\ldots,p_{m},q_{1},q_{2},\ldots,q_{n})=t$, $1\leq  t\leq r$, there exist positive integers $f_{1},f_{2},\ldots,f_{m}$
  and $g_{1},g_{2},\ldots,g_{n}$ with $f_{1}<f_{2}<\ldots<f_{m}$ and $g_{1}<g_{2}<\ldots<g_{n}$ such that
\begin{equation*}
p_{i}=tf_{i}
\end{equation*}
for $1\leq i\leq m$ and
\begin{equation*}
q_{i}=tg_{i}
\end{equation*}
for $1\leq j\leq n$.\\
\indent We construct an $r$-graph $D$ with vertex set $V$ as
follows.\\
\indent Let
\begin{equation*}
V=X_{1}^{1}\cup X_{2}^{1}\cup \ldots \cup
X_{m}^{1}\cup X_{1}^{2}\cup X_{1}^{3}\cup \ldots \cup X_{1}^{n}\cup Y_{1}^{1}\cup Y_{2}^{1}\cup \ldots \cup
Y_{m}^{1}\cup Y_{1}^{2}\cup Y_{1}^{3}\cup \ldots \cup Y_{1}^{n},
\end{equation*}
with $X_{i}^{j}\cap X_{k}^{l}=\phi$, $Y_{i}^{j}\cap Y_{k}^{l}=\phi$,
$X_{i}^{j}\cap Y_{k}^{l}=\phi$ and\\
\indent $|X_{i}^{1}|=g_{1}$, for all $1\leq i\leq m$,\\
\indent $|X_{1}^{i}|=g_{i}$, for all $2\leq i\leq n$,\\
\indent $|Y_{i}^{1}|=f_{i}$, for all $1\leq i\leq m$,\\
\indent $|Y_{1}^{i}|=f_{1}$, for all $2\leq i\leq n$.\\
\indent Let there be $t$ arcs directed from every vertex of
$X_{i}^{1}$ to each vertex of $Y_{i}^{1}$, for all $1\leq i\leq m$ and
let there be $t$ arcs directed from every vertex of
$X_{1}^{i}$ to each vertex of $Y_{1}^{i}$, for all $2\leq i\leq n$ so
that we obtain the $r$-graph $D$ with imbalances of vertices as
under.\\
\indent For $1\leq i\leq m$, for all $x_{i}^{1}\in X_{i}^{1}$
\begin{equation*}
b_{x_{i}^{1}}=t|Y_{i}^{1}|-0=tf_{i}=p_{i},
\end{equation*}
for $2\leq i\leq n$, for all $x_{1}^{i}\in X_{1}^{i}$
\begin{equation*}
b_{x_{1}^{i}}=t|Y_{1}^{i}|-0=tf_{1}=p_{1},
\end{equation*}
for $1\leq i\leq m$, for all $y_{i}^{1}\in Y_{i}^{1}$
\begin{equation*}
b_{y_{i}^{1}}=0-t|X_{i}^{1}|=-tg_{i}=-q_{i},
\end{equation*}
and for $2\leq i\leq n$, for all $y_{1}^{i}\in Y_{1}^{i}$
\begin{equation*}
b_{y_{1}^{i}}=0-t|X_{1}^{i}|=-tg_{i}=-q_{i}.
\end{equation*}
\indent Therefore imbalance set of $D$ is $P\cup Q$.
\end{proof}

\section*{Acknowledgement} The research of the fourth author was supported by the project T\'AMOP-4.2.1/B-09/1/KMR-2010-0003 of
E\"otv\"os Lor\'and University.

The authors are indebted for the useful remarks of the unknown referee.

\bigskip
\rightline{\emph{Received: August 23, 2010}}     

\end{document}